\crefname{hypothesis}{Hypothesis}{Hypotheses}
\title{Unprojected recycled Block Krylov subspace methods for shifted systems
}
\author{Liam Burke\thanks{School Of Mathematics, Trinity College Dublin, College Green, Dublin 2, Ireland  (email: \email{burkel8@tcd.ie})}
}
\begin{document}

\maketitle

\begin{abstract}
The use of block Krylov subspace methods for computing the solution to a sequence of shifted linear systems using subspace recycling was first proposed in [Soodhalter, SISC 2016], where a recycled shifted block GMRES algorithm (rsbGMRES) was proposed. Such methods use the equivalence of the shifted system to a Sylvester equation and exploit the shift invariance of the block Krylov subspace generated from the Sylvester operator. This avoids the need for initial residuals to span the same subspace and allows for a viable restarted Krylov subspace method with recycling for solving sequences of shifted systems.

In this paper we propose to develop these types of methods using \textit{unprojected} Krylov subspaces.  In doing so we show how one can overcome the difficulties associated with developing methods based on projected Krylov subspaces such as rsbGMRES, while also allowing for practical methods to fit within a well known residual projection framework. In addition, unprojected methods are known to be advantageous when the projector is expensive to apply, making them of significant interest for High-Performance Computing applications.  We develop an unprojected rsbFOM and unprojected rsbGMRES. We also develop a procedure for extracting shift dependent harmonic Ritz vectors over an augmented block Krylov subspace for shifted systems yielding an approach for selecting a new recycling subspace after each cycle of the algorithm. Numerical experiments demonstrate the effectiveness of our methods.
\end{abstract}

\begin{keywords}
shifted systems, recycling, block Krylov subspace methods, augmentation, high-performance computing 
\end{keywords}

\begin{MSCcodes}
65F10
\end{MSCcodes}
\overfullrule=0pt 
\section{Introduction}
Many applications in High-Performance Computing (HPC) require one to solve a family of $N$ shifted linear systems of the form
\begin{equation}\label{eq:shiftedsystem}
    ( \textbf{A}^{(\ell)} + \sigma^{(\ell)}_{i} \textbf{I}) \textbf{x}^{(\ell)}(\sigma_{i}) = \textbf{b}^{(\ell)}(\sigma_{i}), \hspace{1cm} \ell = 1,..,N, \hspace{1cm} i = 1,2,..,s^{(\ell)},
\end{equation}
where system $\ell$ has coefficient matrix $\textbf{A}^{(\ell)} \in \mathbb{C}^{n \times n}$ and $s^{(\ell)}$ scalar shifts $\sigma_{i}^{(\ell)} \in \mathbb{C}$. Problems of this type arise in Lattice QCD simulations when using the Rational Hybrid Monte Carlo algorithm to generate QCD gauge fields \cite{birk2012cg, kennedy2008algorithms}. It also arises in the regularization of ill posed inverse problems \cite{MR1694686}. In the HPC setting the matrix $\textbf{A}^{(\ell)}$ is typically large and sparse and Krylov subspace methods are the only efficient means of computing an approximate solution for the shifted system. Krylov subspace methods work by projecting the problem onto a low dimensional Krylov subspace and solving a smaller representation of the problem on that subspace via a direct method. These methods are attractive because they are built only from matrix vector products, making them well suited when the matrix is not stored explicitly. In addition, Krylov subspaces are a natural choice of subspace in which a good approximate solution for every shift can be sought. Another advantage when solving shifted systems of the form (\ref{eq:shiftedsystem}) arises when the initial residuals for each system are linearly dependent and satisfy  $\textbf{r}_{0}^{(i)} = \beta \textbf{r}_{0}^{(j)}, \forall  i,j \in \{ 1,2,..,s \}, \beta \in \mathbb{C}$ where $r_{0}^{(i)}$ is the initial residual corresponding to shift $i$. In this case the following \textit{shift invariance} property is satisfied
\begin{equation}\label{eq:shiftinvariance}
\mathcal{K}_{j}(\textbf{A}^{(\ell)} + \sigma_{i} \textbf{I}, \textbf{r}_{0}^{(i)}) = \mathcal{K}_{j}(\textbf{A}^{(\ell)} + \sigma_{j} \textbf{I}, \textbf{r}_{0}^{(j)})
\end{equation}
with $\sigma_{i},\sigma_{j} \in \mathbb{C} \backslash\{ 0 \}$. 
This collinearity property allows for each shifted system to be solved using the same Krylov subspace and thus avoids extra storage or additional matrix vector products per shift. With initial collinear residuals a cycle of the Full Orthogonalization Method (FOM) \cite{MR616364} or the Generalized Minimum Residual Method (GMRES) \cite{MR848568} can be used to compute an approximate solution over the \textit{base} Krylov subspace $\mathcal{K}_{j}(\textbf{A},\textbf{r}_{0}^{(1)})$.

A cycle of the FOM algorithm always produces collinear residuals, and thus a restarted shifted FOM algorithm  can easily be implemented \cite{MR2011689}. After a cycle of the GMRES algorithm the residuals are in general no longer collinear and developing a restarted shifted GMRES algorithm requires one to either enforce the collinearity condition after each cycle ( see e.g. \cite{MR1616874}), or else work with methods which avoid the collinearity restriction completely. The same issues are encountered when developing augmented or recycled Krylov subspace methods for shifted systems \cite{MR3212178}. In an augmented Krylov subspace method the Krylov subspace is augmented with some fixed $k$ dimensional subspace $\mathcal{U}$ in order to accelerate convergence of the iterative solver. In a \textit{recycled} Krylov subspace method this augmentation subspace is constructed from a Krylov subspace iteration for solving a previous system in a sequence \cite{MR3054589, MR3284571,MR2272183, MR4175151}. The recycled FOM method no longer has naturally collinear residuals after restart and thus also encounters difficulties with the collinearity restriction. Further, a recycled GMRES algorithm continues to suffer this restriction.

In \cite{MR3530183}, a recycled shifted block GMRES (rsbGMRES) method was proposed to avoid the collinearity restriction when incorporating subspace recycling into solving systems of the form (\ref{eq:shiftedsystem}).  The method uses a projected Krylov subspace and works by exploiting the equivalence of the shifted system to a Sylvester equation, and using  the shift invariance of the block Krylov subspace generated by the Sylvester operator.
This avoids the need for the residuals to span the same subspace after each cycle and allows for a viable restarted GMRES algorithm with subspace recycling for shifted systems with unrelated right hand sides. It also allows one to exploit the usual 
computational efficiencies associated with block Krylov subspace methods ( see e.g. \cite{MR3707736}).

The purpose of this paper is to study methods such as rsbGMRES more generally using a well known residual 
projection framework describing augmented Krylov subspace methods. This framework was developed in works such as  \cite{gaul2014recycling, MR3054589, MR2995778, MR3284571} and later generalized in a recent survey on subspace recycling iterative methods \cite{MR4175151}.  The original formulation of the framework describes augmented Krylov subspace methods for linear systems of the form (cf. \ref{eq:linearsystem}) in terms of applying a standard Krylov subspace method to some projected problem, followed by taking an additional correction from the augmentation subspace. Despite its original formulation, recent work \cite{burke2022augmented} has shown that the framework does not need to be viewed in terms of solving a projected problem. An equivalent unprojected formulation of the problem can also be solved and viewing the framework in this way allows one to describe the class of so called \textit{unprojected} augmented Krylov subspace methods.

In this paper we extend the framework to the case of shifted linear systems of the form (\ref{eq:shiftedsystem}) using block Krylov subspaces. By doing so, we show how a practical methods based on projected Krylov subspaces exploiting the ideas presented in \cite{MR3530183} cannot fit within the framework. This is because a shift dependent projector arises from the framework and due to increased storage and computation requirements, a practical method cannot use a separate Krylov subspace for every shift. We discuss how methods such as rsbGMRES which use projected Krylov subspaces can attempt to overcome these difficulties, but in particular we focus on developing methods that fit within the general framework. This serves multiple purposes (see e.g. \cite{MR4175151}).
\begin{itemize}
    \item It allows one to derive new types of methods for certain types of problems more easily by simply selecting appropriate subspaces.
    
    \item It allows one to determine the appropriate projector for the method and this in turn allows one to understand the best conditions to impose on certain basis vectors (e.g. orthonormality) to simplify the structure of the projections. 
\end{itemize}
The main purpose of this paper is to show how the unprojected formulation of the framework \cite{burke2022augmented} allows one to avoid the difficulties with developing recycled shifted block  methods based on projected Krylov subspaces. As such, we show how practical methods based on \textit{unprojected} Krylov subspaces can easily fit within the framework. This gives us a framework for understanding recycled block Krylov subspace methods for computing the solution to a family of shifted linear systems with unrelated right hand sides. We then take advantage of the above properties and easily derive new  methods such as an unprojected rsbFOM and rsbGMRES algorithm. In addition our use of unprojected augmented Krylov subspace methods reduces the amount of times the projector needs to be applied in each cycle of the algorithm. This serves as a computational advantage in situations where the projector may be too expensive to apply and there is a strict constraint on the amount of computations one can perform per iteration \cite{MR1780273,ramlau2020augmented, MR4449205}. This makes our unprojected methods of significant interest in the context of High-Performance Computing applications.

\subsection{Layout of paper} The layout of this paper is as follows. In \Cref{sec:intro_to_Krylov} and \Cref{sec:intro_to_block_krylov} we give a brief introduction to Krylov subspace methods and their extension to block Krylov subspace methods for systems with multiple right hand sides. In \Cref{sec:sbFOMandsbGMRES} we introduce the sbFOM and sbGMRES algorithms which provide a means to avoid the collinearity restriction for shifted systems using block Krylov subspaces. In \Cref{sec:framework_extension} we extend the augmented Krylov subspace framework in \cite{MR3054589, MR4175151} to the case of solving shifted systems of the form \cref{eq:shiftedsystem} using block Krylov subspace methods. This allows us to understand the difficulties associated to developing practical implementations of these methods based on projected Krylov subspaces and as such show how they cannot fit within the framework. We then discuss how rsbGMRES attempts to overcome these difficulties. In \Cref{sec:unprojrsbFOMandGMRES} we show how the unprojected formulation of the framework from \cite{burke2022augmented} allows one to overcome these practical limitations while also allowing practical implementations of recycled shifted block methods to fit within the framework. We derive an unprojected rsbFOM and unprojected rsbGMRES algorithm. In \Cref{sec:harmritz} we derive an shift dependent harmonic Ritz procedure for our methods. Numerical experiments to demonstrate the effectiveness of our methods are presented in \Cref{sec:numerical_experiments}. We finish by giving brief conclusions in \Cref{sec:conclusions}.

\section{An introduction to Krylov subspace methods}\label{sec:intro_to_Krylov}
Krylov subspace methods are a class of iterative methods for solving linear systems of the form 
\begin{equation}\label{eq:linearsystem}
     \textbf{A}\textbf{x} = \textbf{b}
\end{equation}
when the matrix $\textbf{A} \in \mathbb{C}^{n \times n}$ is large and sparse. They work by projecting the problem onto the $j$ dimensional Krylov subspace
\[
     \mathcal{K}_{j}(\textbf{A},\textbf{r}_{0}) = \{ \textbf{r}_{0}, \textbf{A}\textbf{r}_{0}, \textbf{A}^{2}\textbf{r}_{0},...,\textbf{A}^{j-1}\textbf{r}_{0} \},
\]
where $\textbf{r}_{0} = \textbf{b} - \textbf{A}\textbf{x}_{0}$ is the initial residual corresponding to an initial approximation $\textbf{x}_{0}$. A smaller representation of the problem can then be solved efficiently on this subspace via a standard direct method (see e.g. \cite{MR1990645}).

The main kernel of any Krylov subspace method involves building a an orthonormal basis for the Krylov subspace $\mathcal{K}_{j}(\textbf{A},\textbf{r}_{0})$. This is commonly done using a stable version of a Gram-Schmidt process known as \textit{modified} Gram-Schmidt. At iteration $j$ we compute a new basis vector using an application of $\textbf{A}$ to most recently constructed basis vector. We then orthogonalize this vector against all previously constructed basis vectors. The resulting algorithm is known as the Arnoldi algorithm. It is outlined in \Cref{alg:arnoldi}.
\begin{algorithm}[H]
\caption{Arnoldi with Modified Gram-Schmidt \label{alg:arnoldi}}
\begin{algorithmic}[1]
\STATE{\textbf{Input:} $\textbf{A}$, $\textbf{r}_{0}$, cycle length $m$}
\STATE{Compute $\textbf{v}_{1} \leftarrow \frac{\textbf{r}_{0}}{\| \textbf{r}_{0}\|}$ }
\FOR{$j = 1,2,...m$}
\STATE{$\textbf{w} \leftarrow \textbf{A} \textbf{v}_{j}$}
\FOR{$i=1,2,..,j$}
\STATE{$h_{i,j} \leftarrow \textbf{v}_{i}^{*} \textbf{w} $}
\STATE{$\textbf{w} \leftarrow \textbf{w} - h_{i,j} \textbf{v}_{i}$}
\ENDFOR
\STATE{ $h_{j+1,j} \leftarrow \| \textbf{w} \|_{2}$ }
\STATE{ $\textbf{v}_{j+1} \leftarrow \textbf{w} / h_{j+1,j}$}
\ENDFOR
\end{algorithmic}
\end{algorithm}
After iteration $j$ of the Arnoldi algorithm an important relation known as the \textit{Arnoldi relation} is satisfied
\[
    \textbf{A}\textbf{V}_{j} = \textbf{V}_{j+1} \overline{\textbf{H}}_{j} = \textbf{V}_{j}\textbf{H}_{j} + h_{j+1,j} \textbf{v}_{j+1} \textbf{e}_{j}^{T}.
\]
The matrix $\textbf{V}_{j+1} \in \mathbb{C}^{n \times (j+1)}$ stores the basis vectors and the matrix $\overline{\textbf{H}}_{j} \in \mathbb{C}^{(j+1) \times j}$ is an upper Hessenberg matrix storing the orthogonalization coefficients. The matrix $\textbf{H}_{j} \in \mathbb{C}^{j \times j}$ is simply the matrix $\textbf{H}_{j}$ with last row deleted.

At iteration $j$ a correction $\textbf{t}_{j} \in \mathcal{K}_{j}(\textbf{A},\textbf{r}_{0})$ is selected according to some constraint on the residual. The choice of residual constraint space determines the method. The Full Orthogonalization Method (FOM) selects $\textbf{t}_{j}$ such that
\[
     \textbf{r}_{j} = \textbf{b} - \textbf{A}(\textbf{x}_{0} + \textbf{t}_{j}) \perp \mathcal{K}_{j}(\textbf{A},\textbf{r}_{0}).
\]
This leads to a projection of the original problem down to the smaller problem of solving the following $j \times j$ linear system for the vector $\textbf{y}_{j} \in \mathbb{C}^{j}$
\[
     \textbf{H}_{j} \textbf{y}_{j} = \| \textbf{r}_{0} \| \textbf{e}_{1}.
\]

A related method known as the Generalized Minimum Residual Method (GMRES) chooses $\textbf{t}_{j}$ according to the constraint 
\[
     \textbf{r}_{j} = \textbf{b} - \textbf{A}(\textbf{x}_{0} + \textbf{t}_{j}) \perp \textbf{A}\mathcal{K}_{j}(\textbf{A},\textbf{r}_{0}).
\]
The leads to a small $(j+1) \times j$ least squares problem for $\textbf{y}_{j}$ given by
\[
  \textbf{y}_{j} = \operatorname*{argmin}_{\textbf{z} \in \mathbb{C}^{j}}  \left\| \| \textbf{r}_{0} \| \textbf{e}_{1} - \overline{\textbf{H}}_{j} \textbf{z} \right\|.
\]

Once we computed $\textbf{y}_{j}$ we can compute the new approximation at step $j$ as 
\[
\textbf{x}_{j} = \textbf{x}_{0} + \textbf{V}_{j} \textbf{y}_{j}.
\]
The residual can then be updated cheaply via the Arnoldi relation by
\[
    \textbf{r}_{j} = \textbf{r}_{0} - \textbf{V}_{j+1} \overline{\textbf{H}}_{j}\textbf{y}_{j}.
\]
Due to the fact that the basis grows in size at each iteration, memory constraints make restarted Krylov subspace methods unavoidable. In a restarted Krylov subspace method we run a $j$ length Arnoldi cycle and compute a new approximation $\textbf{x}_{j}$ and associated residual $\textbf{r}_{j}$.  We then discard the basis $\textbf{V}_{j+1}$ and the matrix $\overline{\textbf{H}}_{j}$ and run a new cycle with initial approximation $\textbf{x}_{j}$ using the Krylov subspace $\mathcal{K}(\textbf{A},\textbf{r}_{j})$. We repeat this process until a desired level of convergence is met or until a user specified maximum amount of cycles has completed. The restarted FOM and GMRES algorithms are outlined in \Cref{alg:restartedFOMandGMRES}.

\begin{algorithm}[H]
\caption{Restarted FOM and GMRES
\label{alg:restartedFOMandGMRES}} 
\begin{algorithmic}[1]
\STATE{\textbf{Input:} $\textbf{A} \in \mathbb{C}^{n \times n}$,  $\textbf{b} \in \mathbb{C}^{n}$, $\textbf{x}_{0} \in \mathbb{C}^{n}$, tolerance $\epsilon$}
\STATE{$\textbf{r}_{0} = \textbf{b} - \textbf{A} \textbf{x}_{0}$}
\WHILE{$\|\textbf{r}_{0}\| > \epsilon$}
\STATE{Build a basis for the Krylov subspace $\mathcal{K}_{j}(\textbf{A},\textbf{r}_{0})$ via the Arnoldi algorithm generating $\textbf{V}_{j+1} \in \mathbb{C}^{n \times (j+1)}$ and $\overline{\textbf{H}}_{j} \in \mathbb{C}^{(j+1) \times j}$.}
  \IF{FOM}
      \STATE{Solve $ \textbf{H}_{j} \textbf{y} = \| \textbf{r}_{0} \|\textbf{e}_{1} $ for $\textbf{y}$.}
  \ENDIF
  \IF{GMRES}
     \STATE{$\textbf{y}_{j} \leftarrow \operatorname*{argmin}_{\textbf{z} \in \mathbb{C}^{j}}  \left\| \| \textbf{r}_{0} \| \textbf{e}_{1} - \overline{\textbf{H}}_{j} \textbf{z} \right\| $ }
  \ENDIF
  \STATE{$\textbf{x}_{j}  \leftarrow \textbf{x}_{0} + \textbf{V}_{j}\textbf{y}$}
  \STATE{$\textbf{r}_{j} \leftarrow \textbf{r}_{0} - \textbf{V}_{j+1} \overline{\textbf{H}}_{j} \textbf{y}$}
\ENDWHILE
\end{algorithmic}
\end{algorithm}

\section{Block Krylov subspace methods}\label{sec:intro_to_block_krylov}
Consider the case where one has a linear system with several right hand sides available simultaneously $\textbf{b}^{(1)}, \textbf{b}^{(2)},.., \textbf{b}^{(s)} \in \mathbb{C}^{n}$. In this setting one can put all the right hand sides as columns into a single matrix
\[
     \textbf{B} \leftarrow \begin{bmatrix}
           \textbf{b}^{(1)} & \textbf{b}^{(2)} & ... & \textbf{b}^{(s)}
     \end{bmatrix}  \in \mathbb{C}^{n \times s},
\]
and compute the solution approximation $\textbf{X}_{j} \in \mathbb{C}^{n \times s}$ to the corresponding block linear system $\textbf{A}\textbf{X} = \textbf{B}$ using a generalization of Krylov subspace methods to the so called \textit{block} Krylov subspace methods. Beginning with an initial solution approximation $\textbf{X}_{0}$ and corresponding block residual $\textbf{R}_{0} = \textbf{B} - \textbf{A}\textbf{X}_{0}$ one can build a basis for the block Krylov subspace
\begin{align*}
    \mathcal{K}_{j}(\textbf{A},\textbf{R}_{0}) = \mbox{colspan}\{ \textbf{R}_{0}, \textbf{A}\textbf{R}_{0}, \textbf{A}^{2}\textbf{R}_{0}, .. , \textbf{A}^{j-1} \textbf{R}_{0}  \} \\ = \mathcal{K}_{j}(\textbf{A},\textbf{r}_{0}^{(1)}) + \mathcal{K}_{j}(\textbf{A},\textbf{r}_{0}^{(2)}) + ... +
    \mathcal{K}_{j}(\textbf{A},\textbf{r}_{0}^{(s)})
\end{align*}
where $\textbf{r}_{0}^{(i)} \in \mathbb{C}^{n}$ is the $i^{th}$ column of the block vector $\textbf{R}_{0}$. We use the $j$ subscript to denote the fact the subspace is of dimension $j s$ where $s$ is is the number of right hand sides.
An orthonormal basis for the block Krylov subspace $\mathcal{K}_{j}(\textbf{A},\textbf{R}_{0})$ can be built using a block extension of the Arnoldi algorithm. This time, the matrix $\textbf{A}$ is applied to a normalized block vector at each iteration. The normalization step of the block Arnoldi algorithm now involves computing a reduced QR factorization of an $n \times s$ block vector. The normalized starting block vector $V_{1} \in \mathbb{C}^{n \times s}$ can be computed using the reduced QR factorization $\textbf{R}_{0} = V_{1} \textbf{S}_{0}$ where  $\textbf{S}_{0} \in \mathbb{C}^{s \times s}$ is an upper triangular matrix.
We denote each block element generated by the block Arnoldi process as a capital letter. At iteration $j$ a new block vector $V_{j}$ is computed and collected as block columns of the matrix
$\textbf{W}_{j+1} = \begin{bmatrix}
      V_{1} & V_{2} & ... & V_{j+1}
\end{bmatrix} \in \mathbb{C}^{n \times (j+1)s}$.
The corresponding  block Hessenberg matrix $\overline{\textbf{H}}_{j} \in \mathbb{C}^{(j+1)s \times js}$ is also generated.  The elements of this matrix are now $s \times s $ blocks $H_{i,j} \in \mathbb{C}^{s \times s}$.

\begin{algorithm}[H]
\caption{Block Arnoldi with Modified Block Gram-Schmidt \label{alg:block_arnoldi}}
\begin{algorithmic}[1]
\STATE{\textbf{Input:} $\textbf{A}$, $\textbf{R}$, cycle length $m$}
\STATE{Compute the reduced QR decomposition $\textbf{R} = V_{1} \textbf{S}_{0}$}
\FOR{$j = 1,2,...,m$}
\STATE{$W \leftarrow \textbf{A} V_{j}$}
\FOR{$i=1,2,..,j$}
\STATE{$H_{i,j} \leftarrow V_{i}^{*} W $}
\STATE{$W \leftarrow W - V_{i} H_{i,j}$}
\ENDFOR
\STATE{Compute the reduced QR decomposition $W \leftarrow V_{j+1}H_{j+1,j}$}
\ENDFOR
\end{algorithmic}
\end{algorithm}

After iteration $j$ of the block Arnoldi process we have the block Arnoldi relation 
\begin{equation}\label{eq:block_Arnoldi}
    \textbf{A} \textbf{W}_{j} = \textbf{W}_{j+1} \overline{\textbf{H}}_{j} = \textbf{W}_{j} \textbf{H}_{j} + V_{j+1} H_{j+1,j} \textbf{E}_{j}^{T},
\end{equation}
where $\textbf{E}_{j} \in \mathbb{C}^{j s \times s}$ is a block vector of zeros with the $s \times s$ identity matrix contained in the first $s$ rows. The matrix $\textbf{H}_{j} \in \mathbb{C}^{j s \times j s}$ is now the matrix $\overline{\textbf{H}}_{j}$ with the last $s$ rows deleted. The block Arnoldi algorithm allows for an extension of FOM and GMRES to \textit{block} FOM and \textit{block} GMRES respectively \cite{MR1990645}. These methods require one to solve a problem for the block vector $\textbf{Y}_{j} \in \mathbb{C}^{j s \times s}$. For block FOM this problem is
\[
       \textbf{H}_{j} \textbf{Y}_{j} = \textbf{E}_{j} \textbf{S}_{0},
\]
and for block GMRES it is
\[
  \textbf{Y}_{j} = \operatorname*{argmin}_{\textbf{Z} \in \mathbb{C}^{sj \times s}}  \left \| \textbf{E}_{j+1} \textbf{S}_{0} - \overline{\textbf{H}}_{j} \textbf{Z} \right\|.
\]
The block solution approximation can then be computed via 
\[
     \textbf{X}_{j} = \textbf{X}_{0} + \textbf{W}_{j} \textbf{Y}_{j}.
\]

Block Krylov subspaces have many advantages (see e.g. \cite{MR2206505,MR3707736}). Instead of solving for every right hand side separately using $s$ single standard Krylov subspace methods, we can solve for them all simultaneously using block Krylov subspace methods. In addition block Krylov subspace methods have added computational efficiency since the matrix $\textbf{A}$ needs to only be accessed once per step of the Arnoldi process as opposed to $s$ times. In addition, block Krylov subspace methods construct a larger subspace from which the approximation is drawn for each system, leading to better approximations. The main advantage exploited in this work is the fact the block Krylov subspace is shift invariant
\begin{equation}\label{eq:block_shift_invariance}
    \mathcal{K}_{j}(\textbf{A},\textbf{R}_{0}) = \mathcal{K}_{j}(\textbf{A}+\sigma \textbf{I},\textbf{R}_{0})
\end{equation}
with $\sigma \in \mathbb{C}\backslash \{ 0 \}$, and this relation holds without any collinearity condition. With this invariance comes the shifted block Arnoldi relation
\[
    (\textbf{A} + \sigma \textbf{I}) \textbf{W}_{j} = \textbf{W}_{j+1}(\overline{\textbf{H}}_{j} + \sigma \overline{\textbf{I}}).
\]
In this case $\overline{\textbf{I}} \in \mathbb{R}^{(j+1)s \times j s}$ is simply a $j s \times j s$ identity matrix padded with an additional $s$ rows of zeros underneath. Before we consider recycling, we discuss the use of block Krylov subspace methods for avoiding the collinearity restriction when solving shifted systems by exploiting the shift invariance \cref{eq:block_shift_invariance}. The resulting algorithms are known as shifted block FOM (sbFOM) and shifted block GMRES (sbGMRES) \cite{MR1420277, MR3532794}.

\section{sbFOM and sbGMRES}\label{sec:sbFOMandsbGMRES}
We will drop the system index notation in  (\ref{eq:shiftedsystem}) and use a superscript $^{(i)}$ to denote vectors corresponding to shift $\sigma_{i}$. We first discuss shifted FOM and GMRES for computing the solution approximation for shift $\sigma_{i}, \textbf{x}_{j}^{(i)}$ for systems of the form
\begin{equation}\label{eq:no_superscript_shifted_system}
     (\textbf{A} + \sigma_{i} \textbf{I}) \textbf{x}^{(i)} = \textbf{b}^{(i)} \hspace{1cm} i = 1,2, .. , s
\end{equation}
with $\sigma_{i},\sigma_{j} \in \mathbb{C}$. 

The use of Krylov subspace methods is an attractive means of solving such systems provided that the initial residuals for each shift $\sigma_{i}$ span the same subspace and satisfy $\textbf{r}_{0}^{(i)} = \beta \textbf{r}_{0}^{(j)}, j \neq i$ where $\textbf{r}_{0}^{(i)}$ is the initial residual corresponding to shift $i$. If this holds true for all $s$ shifts then the \textit{shift invariance} property is satisfied
\[
\mathcal{K}_{j}(\textbf{A} + \sigma_{i} \textbf{I}, \textbf{r}^{(i)}_{0}) = \mathcal{K}_{j}(\textbf{A} + \sigma_{j} \textbf{I}, \textbf{r}_{0}^{(j)}).
\]
This means one can compute the solution for all shifts over the single Krylov subspace $\mathcal{K}_{j}(\textbf{A},\textbf{r}_{0}^{(1)})$ where $\textbf{r}_{0}^{(1)}$ is the initial residual corresponding to the base shift.
We can use the same basis $\textbf{V}_{j+1}$ for all shifts which now satisfy the shifted Arnoldi relation
\[
    (\textbf{A} + \sigma_{i} \textbf{I}) \textbf{V}_{j} = \textbf{V}_{j+1}(\overline{\textbf{H}}_{j} + \sigma_{i} \overline{\textbf{I}} ) = \textbf{V}_{j}(\textbf{H}_{j} + \sigma_{i} \textbf{I}) + h_{j+1,j} \textbf{v}_{j+1} \textbf{e}_{j}^{T},
\]
where the matrix $\overline{\textbf{I}} \in \mathbb{C}^{(j+1) \times j}$ is the $j \times j$ identity matrix padded with an additional row of zeros in the bottom row. Similar to the non-shifted case, this Arnoldi relation allows one to derive a shifted FOM approximation for shift $i$ 
\[
    \textbf{y}_{j}^{(i)} = (\textbf{H}_{j} + \sigma_{i} \textbf{I})^{-1} (\| \textbf{r}_{0} \| \textbf{e}_{1}),
\]
where $\| \textbf{r}_{0} \| $ is the norm of the initial residual corresponding to the base shift.

A shifted GMRES approximation for shift $i$ can also be derived to give
\[
  \textbf{y}_{j}^{(i)} = \operatorname*{argmin}_{\textbf{z} \in \mathbb{C}^{j}}  \left\| \| \textbf{r}_{0} \| \textbf{e}_{1} - (\overline{\textbf{H}}_{j} + \sigma_{i} \textbf{I}) \textbf{z} \right\|.
\]
The solution approximation at step $j$ for shift $i$ can then be updated in a similar way as before by
\[
    \textbf{x}_{j}^{(i)} = \textbf{x}_{0}^{(i)} + \textbf{V}_{j} \textbf{y}_{j}^{(i)},
\]
and the residual update for shift $i$ is
\[
    \textbf{r}_{j}^{(i)} = \textbf{r}_{0}^{(i)} - \textbf{V}_{j+1} (\overline{\textbf{H}}_{j} + \sigma_{i} \overline{\textbf{I}})\textbf{y}_{j}^{(i)}.
\]
After a cycle of the shifted FOM algorithm \cite{MR2011689} the newly update residuals are naturally collinear and thus the shifted FOM algorithm can easily be restarted. This is not true for GMRES since the residuals are generally no longer collinear after restart. Methods do exist which enforce collinearity allowing for a viable restarted method for shifted systems \cite{MR1616874}. Another way to avoid the collinearity restriction completely is to use block Krylov subspace methods. We discuss this in the context of shifted block FOM (sbFOM) and shifted block GMRES (sbGMRES) \cite{MR1420277, MR3532794}. The following discussion is primarily based on \cite{MR3530183}.

We note from works such as \cite{MR1909198, MR1420277, MR3532794, MR3530183} that the shifted system \cref{eq:no_superscript_shifted_system} is equivalent to the following Sylvester equation
\[
     \textbf{A} \textbf{X} + \textbf{X} \textbf{D} = \textbf{B},
\]
where $\textbf{D} = \mbox{diag}\{ \sigma_{1}, \sigma_{2},..,\sigma_{s}\} \in \mathbb{C}^{s \times s}$, and $\textbf{X},\textbf{B} \in \mathbb{C}^{n \times s}$ are the block vectors containing the solutions and right hand sides respectively. It has been shown in \cite{MR1909198} that for a vector block $\textbf{F} \in \mathbb{C}^{n \times s}$ if we define the Sylvester operator as 
\[
    \mathcal{T} : \textbf{F} \rightarrow \textbf{A} \textbf{F} + \textbf{F} \textbf{D},
\]
and define powers of the operator as
\[
   \mathcal{T}^{i}\textbf{F} = \mathcal{T} (\mathcal{T}^{i-1} \textbf{F}),
\]
with $\mathcal{T}^{0} \textbf{F} = \textbf{F}$, then the block Krylov subspace generated by $\mathcal{T}$ and $\textbf{F}$ is equivalent to that generated by the matrix $\textbf{A}$ and $\textbf{F}$. In particular, we have 
\begin{equation}\label{eq:sylvester_invariance}
\mathcal{K}_{j}(\mathcal{T},\textbf{F}) = \mathcal{K}_{j}(\textbf{A}, \textbf{F}).
\end{equation}
With initial approximation $\textbf{X}_{0}$ and corresponding residual $\textbf{R}_{0} = \textbf{B} - (\textbf{A}\textbf{X}_{0} + \textbf{X}_{0}\textbf{D})$, one can generate the block Krylov subspace $\mathcal{K}_{j}(\textbf{A},\textbf{R}_{0})$ and project the problem onto this subspace and solve using a FOM or GMRES type method for shifted systems over a block Krylov subspace. The shift invariance of the block Krylov subspace \cref{eq:block_shift_invariance} and the equivalence of the block Krylov subspace to that generated by the Sylvester operator \cref{eq:sylvester_invariance} allows such methods to completely avoid the need for collinear residuals. 

A shifted block FOM (sbFOM) method for the shifted system \cref{eq:no_superscript_shifted_system} can be derived  by imposing the following FOM constraint on the residual of the $i^{th}$ system
\[
    \textbf{r}_{j}^{(i)} = \textbf{b}^{(i)} - (\textbf{A} + \sigma_{i} \textbf{I})(\textbf{x}_{0}^{(i)} + \textbf{t}_{j}^{(i)}) \perp \mathcal{K}_{j}(\textbf{A},\textbf{R}_{0}).
\]
This leads to the following $j s \times j s$ linear system for $\textbf{y}_{j}^{(i)}$
\[
    (\textbf{H}_{j} + \sigma_{i} \textbf{I}) \textbf{y}_{j}^{(i)} = \textbf{E}_{j} \textbf{S}_{0} \textbf{e}_{i}.
\]
Similarly a shifted block GMRES (sbGMRES) algorithm can be derived using 
\[
    \textbf{r}_{j}^{(i)} = \textbf{b}^{(i)} - (\textbf{A} + \sigma_{i} \textbf{I})(\textbf{x}_{0}^{(i)} + \textbf{t}_{j}^{(i)}) \perp \textbf{A} \mathcal{K}_{j}(\textbf{A},\textbf{R}_{0}).
\]
This leads to the following minimization problem for shift $i$.
\[
\textbf{y}^{(i)}_{j} = \operatorname*{argmin}_{\textbf{z} \in \mathbb{C}^{js}}  \left\| \textbf{E}_{j+1} \textbf{S}_{0} \textbf{e}_{i} - (\overline{\textbf{H}}_{j} + \sigma_{i}\overline{\textbf{I}}) \textbf{z} \right\|.
\]

Since sbFOM and sbGMRES are based on block Krylov subspace methods it is important to ensure the initial residuals at the beginning of each cycle are linearly independent. This avoids numerical breakdowns in the block Arnoldi algorithm. There is different ways one can render initial residuals non-collinear \cite{MR3530183}. In this paper we simply propose to take a random initial approximation for $\textbf{X}_{0}$. Throughout this paper we will assume there is no breakdowns in the block Arnoldi process and that $\mbox{dim}(\mathcal{K}_{j}(\textbf{A},\textbf{R}_{0})) = j s$. The sbFOM and sbGMRES algorithms are outlined in \Cref{alg:sbFOMandsbGMRES}.

\begin{algorithm}[H]
\caption{sbFOM and sbGMRES
\label{alg:sbFOMandsbGMRES}} 
\label{version1:alg}
\begin{algorithmic}[1]
\STATE{\textbf{Input:} $\textbf{A} \in \mathbb{C}^{n \times n}$,  $\textbf{b}^{(1)}, \textbf{b}^{(2)}, ..., \textbf{b}^{(s)} \in \mathbb{C}^{n}$, $\textbf{x}_{0}^{(1)}, \textbf{x}_{0}^{(2)},...,\textbf{x}_{0}^{(s)} \in \mathbb{C}^{n}$, shifts $\sigma_{1},..,\sigma_{s} \in \mathbb{C}$, tolerance $\epsilon$, Arnoldi cycle length $j$.}
\FOR{$i=1,2,..,s$}
  \STATE{$\textbf{r}^{(i)}_{0} = \textbf{b}^{(i)} - (\textbf{A}+\sigma_{i}\textbf{I}) \textbf{x}^{(i)}_{0}$}
\ENDFOR
\IF{$\textbf{r}_{i} = \beta \textbf{r}_{k}$ for some $i \neq k$ and $\beta \in \mathbb{C}$}
\STATE{Render residuals non-collinear.}
\ENDIF
\STATE{Set $\textbf{R} \leftarrow \begin{bmatrix} \textbf{r}_{0}^{(1)} & \textbf{r}_{0}^{(2)} & ... & \textbf{r}_{0}^{(s)} \end{bmatrix}$}
\WHILE{$\|\textbf{R}\| > \epsilon$}
\STATE{Build a basis for the block Krylov subspace $\mathcal{K}_{j}(\textbf{A},\textbf{R})$ via the block Arnoldi algorithm generating $\textbf{W}_{j+1} \in \mathbb{C}^{n \times (j+1)s}$ and $\overline{\textbf{H}}_{j} \in \mathbb{C}^{(j+1)s \times js}$.}
\FOR{$i = 1,.., s$}
  \IF{FOM}
      \STATE{Solve $ (\textbf{H}_{j} + \sigma_{i} \textbf{I}) \textbf{y} = \textbf{E}_{j} \textbf{S}_{0} \textbf{e}_{i}$ for $\textbf{y}$.}
  \ENDIF
  \IF{GMRES}
     \STATE{$\textbf{y} = \operatorname*{argmin}_{\textbf{z} \in \mathbb{C}^{js}}  \left\| \textbf{E}_{j+1} \textbf{S}_{0} \textbf{e}_{i} - (\overline{\textbf{H}}_{j} + \sigma_{i}\overline{\textbf{I}}) \textbf{z} \right\|$}
  \ENDIF
  \STATE{$\textbf{x}_{j}^{(i)}  \leftarrow \textbf{x}_{0}^{(i)} + \textbf{W}_{j}\textbf{y}$}
  \STATE{$\textbf{r}_{j}^{(i)} \leftarrow \textbf{r}_{0}^{(i)} - \textbf{W}_{j+1} (\overline{\textbf{H}}_{j} + \sigma_{i} \overline{\textbf{I}}) \textbf{y}$}
\ENDFOR
\STATE{$\textbf{R} \leftarrow \begin{bmatrix} \textbf{r}_{j}^{(1)} & \textbf{r}_{j}^{(2)} & ... & \textbf{r}_{j}^{(s)}\end{bmatrix}$}
\ENDWHILE
\end{algorithmic}
\end{algorithm}

We now show how one can use the same idea as sbFOM and sbGMRES to develop a \textit{recycled} Krylov subspace method for shifted systems with unrelated right hand sides. This was done in \cite{MR3530183} where the rsbGMRES algorithm was derived without using the framework \cite{MR3054589,MR4175151}.  Before discussing the details of rsbGMRES, we will attempt to extend the framework to the class of recycled shifted block methods. We will follow a similar approach as in \cite{MR4175151} and do so in terms of arbitrary search and constraint spaces. We will use the framework  to understand how to best derive a recycled shifted block method.

\section{A derivation of an augmented Krylov subspace approximation to shifted linear systems with block Krylov subspaces}\label{sec:framework_extension}
We present an extension of the framework to the case of shifted linear systems using block Krylov subspaces in an attempt to derive a recycled block Krylov subspace method for shifted systems with linearly independent right hand sides. In practice we must use the same $j s$ dimensional block Krylov subspace $\mathcal{W}_{j}$ for every shift since building a separate basis for every shift is not practical or efficient in terms of computation or storage requirements. We assume an initial approximation for all shifts $\textbf{x}_{0}^{(i)}, i = 1,2,..,s$ is available and that a solution correction for shift $i$ at step $j$ will be taken from the  block Krylov subspace $ \textbf{t}_{j}^{(i)} \in \mathcal{W}_{j} $ and a fixed $k$ dimensional augmentation subspace $\textbf{s}_{j}^{(i)} \in \mathcal{U}$. This yields an approximation for shift $i$ defined by
\[
    \textbf{x}_{j}^{(i)} = 
 \textbf{x}_{0}^{(i)} + \textbf{s}_j^{(i)} + \textbf{t}_{j}^{(i)}
= \textbf{x}_{0}^{(i)} +    \textbf{U} \textbf{z}_{j}^{(i)} + \textbf{W}_{j} \textbf{y}_{j}^{(i)}
\]
with $\textbf{z}_{j}^{(i)} \in \mathbb{C}^{k}$ and $\textbf{y}_{j}^{(i)} \in \mathbb{C}^{j s}$ to be determined. This is done by imposing the residual for shift $i$ be orthogonal to the sum of some other $j s$ dimensional block Krylov subspace $\widetilde{\mathcal{W}}_{j}$ and some other augmentation subspace $\widetilde{\mathcal{U}}$
\[
    \textbf{r}_{j}^{(i)} = \textbf{b} - (\textbf{A} + \sigma_{i} \textbf{I})( \textbf{x}_{0}^{(i)} + \textbf{U} \textbf{z}_{j}^{(i)} + \textbf{W}_{j} \textbf{y}_{j}^{(i)}   ) \perp  \widetilde{\mathcal{U}} + \widetilde{\mathcal{W}}_{j}.
\]
 The residual constraint yields the following linear system for $\textbf{z}_{j}^{(i)}$ and $\textbf{y}_{j}^{(i)}$
\[
\begin{bmatrix} \widetilde{\textbf{U}}^{*}(\textbf{A} + \sigma_{i} \textbf{I}) \textbf{U} &    \widetilde{\textbf{U}}^{*}(\textbf{A} + \sigma \textbf{I}) \textbf{W}_{j} \\
   \widetilde{\textbf{W}}^{*}_{j}(\textbf{A} + \sigma_{i} \textbf{I}) \textbf{U} &  \widetilde{\textbf{W}}^{*}_{j}(\textbf{A} + \sigma_{i} \textbf{I}) \textbf{W}_{j}
  \end{bmatrix} \begin{bmatrix}
        \textbf{z}_{j}^{(i)} \\
        \textbf{y}_{j}^{(i)} 
    \end{bmatrix} = \begin{bmatrix}
        \widetilde{\textbf{U}}^{*} \textbf{r}_{0}^{(i)} \\
       \widetilde{\textbf{W}}_{j}^{*} \textbf{r}_{0}^{(i)}
  \end{bmatrix}.
\]
A block LU factorization then yields a new linear system for $\textbf{z}_{j}^{(i)}$ and $\textbf{y}_{j}^{(i)}$ with coefficient matrix
\begin{align*}
 \begin{bmatrix}
        \widetilde{\textbf{U}}^{*}( \textbf{A} + \sigma_{i} \textbf{I}) \textbf{U} &  \widetilde{\textbf{U}}^{*}( \textbf{A} + \sigma_{i} \textbf{I}) \textbf{W}_{j} \\ 0 &  \widetilde{\textbf{W}}^{*}_{j}( \textbf{A} + \sigma_{i} \textbf{I}) \textbf{W}_{j} - \widetilde{\textbf{W}}^{*}_{j} ( \textbf{A} + \sigma_{i} \textbf{I}) \textbf{U} (\widetilde{\textbf{U}}^{*} ( \textbf{A} + \sigma_{i} \textbf{I}) \textbf{U})^{-1} \widetilde{\textbf{U}}^{*}( \textbf{A} + \sigma_{i} \textbf{I}) \textbf{W}_{j}
    \end{bmatrix} 
\end{align*}
and right hand side
\begin{align*}
  \begin{bmatrix}
        \widetilde{\textbf{U}}^{*} \textbf{r}^{(i)} \\
        \widetilde{\textbf{W}}^{*}_{j} \textbf{r}_{0}^{(i)} - \widetilde{\textbf{W}}^{*}_{j} ( \textbf{A} + \sigma_{i} \textbf{I}) \textbf{U}(\widetilde{\textbf{U}}^{*} ( \textbf{A} + \sigma_{i} \textbf{I}) \textbf{U})^{-1}\widetilde{\textbf{U}}^{*} \textbf{r}_{0}^{(i)}
     \end{bmatrix}.
\end{align*}
The first equation yields a means of computing the correction $\textbf{z}_{j}^{(i)}$ in terms of $\textbf{y}_{j}^{(i)}$ 
\[
    \textbf{z}_{j}^{(i)} = (\widetilde{\textbf{U}}^{*} (\textbf{A} + \sigma_{i} \textbf{I}) \textbf{U})^{-1}\widetilde{\textbf{U}}^{*} \textbf{r}_{0}^{(i)} -  (\widetilde{\textbf{U}}^{*} (\textbf{A} + \sigma_{i} \textbf{I}) \textbf{U})^{-1}\widetilde{\textbf{U}}^{*} (\textbf{A} + \sigma_{i} \textbf{I}) \textbf{W}_{j} \textbf{y}_{j}^{(i)},
\]
and the second equation then gives a single equation for $\textbf{y}_{j}^{(i)}$ itself
\begin{equation}
    \widetilde{\textbf{W}}_{j}^{*} (\textbf{I} - \Phi^{(i)})(\textbf{A} + \sigma_{i} \textbf{I}) \textbf{W}_{j} \textbf{y}_{j}^{(i)} =  \widetilde{\textbf{W}}_{j}^{*} (\textbf{I} - \Phi^{(i)}) \textbf{r}_{0}^{(i)},
\end{equation}
where  $\Phi^{(i)} := (\textbf{A} + \sigma_{i} \textbf{I}) \textbf{U} (\widetilde{\textbf{U}}^{*}(\textbf{A} + \sigma_{i} \textbf{I}) \textbf{U})^{-1} \widetilde{\textbf{U}}^{*}$ is the projector onto $(\textbf{A} + \sigma_{i} \textbf{I}) \mathcal{U}$ along $\widetilde{\mathcal{U}}^{\perp}$.
\par
From this equation we see that computing $\textbf{y}_{j}^{(i)}$ is equivalent to applying the following projection method.
\\ \newline
\noindent\fbox{%
    \parbox{\textwidth}{
   Find $\textbf{x}_{j}^{(i)} \in \mathcal{W}_{j}$  as an approximate solution corresponding to shift $\sigma_{i}$ for the projected and shifted linear system
   \begin{equation} \label{projectedproblem}
    (\textbf{I} - \Phi^{(i)})   ( \textbf{A} + \sigma_{i} \textbf{I}) \textbf{x}^{(i)} = (\textbf{I} - \Phi^{(i)}) \textbf{r}_{0}^{(i)}
   \end{equation}
    such that $\textbf{r}_{j}^{(i)} = (\textbf{I} - \Phi^{(i)} ) (\textbf{r}_{0}^{(i)} - (\textbf{A} + \sigma_{i} \textbf{I}) \textbf{x}_{j}^{(i)}) \perp \widetilde{\mathcal{W}}_{j} $.
    }
}
\newline \\
Once we have computed $\textbf{y}_{j}^{(i)}$ we can incorporate $\textbf{z}_{j}^{(i)}$ implicitly into the full solution approximation at step $j$ for shift $i$ to get
\begin{equation}\label{eq:general_solutionapproximation}
    \textbf{x}_{j}^{(i)} = \textbf{x}_{0}^{(i)} + \textbf{W}_{j} \textbf{y}_{j}^{(i)} + \textbf{U}(\widetilde{\textbf{U}}^{*}(\textbf{A}+\sigma_{i} \textbf{I}) \textbf{U})^{-1} \widetilde{\textbf{U}}^{*}(\textbf{r}_{0}^{(i)} - (\textbf{A} + \sigma_{i} \textbf{I}) \textbf{W}_{j} \textbf{y}_{j}^{(i)}).
\end{equation}

In this paper we propose to use the framework to derive recycled shifted block Krylov subspace methods by choosing $\mathcal{W}_{j}$ to be an appropriate block Krylov subspace in addition to making appropriate choices of the other subspaces $\widetilde{\mathcal{W}}, \widetilde{\mathcal{U}}$ and $\mathcal{U}$. A practical method then solves the problem for $\textbf{y}_{j}^{(i)}$ with these subspaces and computes the updated solution approximation \cref{eq:general_solutionapproximation} and corresponding residual. We now investigate appropriate choices of $\mathcal{W}_{j}$.
\subsection{Using the projected formulation}
We have just shown how the framework reduces to solving a projected problem \cref{eq:shiftedequationfory} for every shift $i$. The original formulation of the framework (see e.g  \cite{gaul2014recycling, MR3054589, MR2995778, MR3284571, MR4175151} ) then proposes to solve this projected problem over the appropriately projected Krylov subspace. While this does not give any issues for problems of the form \cref{eq:linearsystem} for which the framework was derived, problems arise with our extension of the framework to shifted systems of the form \cref{eq:no_superscript_shifted_system}. This is because the corresponding projected block Krylov subspace is now given by
\[
\mathcal{K}_{j}((\textbf{I} - \Phi^{(i)}) \mathcal{T}, \widehat{\textbf{R}})
\]
where $\mathcal{T}$ is the Sylvester operator corresponding to the shifted system \cref{eq:no_superscript_shifted_system} and the columns of $\widehat{\textbf{R}}$ are given by 
\[
\widehat{\textbf{r}}^{(i)} = \textbf{r}_{0}^{(i)} - (\textbf{A} + \sigma_{i}\textbf{I}) \textbf{U} (\widetilde{\textbf{U}}^{*}(\textbf{A} + \sigma_{i}\textbf{I}) \textbf{U})^{-1} \widetilde{\textbf{U}}^{*} \textbf{r}_{0}^{(i)}.
\]

This introduces two main problems with the use of this Krylov subspace which renders methods based on projected Krylov subspaces impractical.  Firstly, projected block Krylov subspaces are no longer shift invariant and in general \[
     \mathcal{K}_{j}((\textbf{I} - \Phi^{(i)}) \textbf{A}, \widehat{\textbf{R}}) \neq \mathcal{K}_{j}((\textbf{I} - \Phi^{(i)}) (\textbf{A} + \sigma_{i} \textbf{I}), \widehat{\textbf{R}}).
\]
This is one of the main difficulties one encounters when attempting to develop a subspace recycling method for the solution of shifted systems \cite{MR3530183, MR3212178}.
In addition, we also generally have that
\[
   \mathcal{K}_{j}((\textbf{I} - \Phi^{(i)}) \mathcal{T}, \widehat{\textbf{R}}) \neq  \mathcal{K}_{j}((\textbf{I} - \Phi^{(i)}) \textbf{A}, \widehat{\textbf{R}}).
\]
The practical issue here is that we cannot build a separate block Krylov subspace for each shift since this will require excessive storage and computation and violate our assumption that the same subspace $\mathcal{W}_{j}$ is used for every shift.

\subsection{rsbGMRES}
Although not derived from the framework, the rsbGMRES algorithm overcomes the difficulties from the previous section by using the same projected Krylov subspace for each shift $\mathcal{K}_{j}((\textbf{I} - \textbf{C}\textbf{C}^{*}), \widehat{\textbf{R}})$ where $\textbf{C} = \textbf{A}\textbf{U}$ is taken to have orthonormal columns. The use of this projector was motivated by the GCRO-DR algorithm \cite{MR2272183} for systems of the form \cref{eq:linearsystem}. Using this same projector for each shift allows for a practical implementation of the rsbGMRES algorithm, but it cannot fit within the framework since it is not using the appropriate projectors for every shift. The rsbGMRES algorithm also relies on the following invariance 
\[
    \mathcal{K}_{j}((\textbf{I} - \textbf{C} \textbf{C}^{*}) \textbf{A},\widehat{\textbf{R}}) =   \mathcal{K}_{j}((\textbf{I} - \textbf{C} \textbf{C}^{*}) \mathcal{T},\widehat{\textbf{R}}),
\]
which holds true only when $\mbox{Range}(\widehat{\textbf{R}}) \perp \mathcal{C}$ where $\mathcal{C}$ is the space spanned by the columns of $\textbf{C}$ \cite[Proposition 4.1]{MR3530183}. It thus requires at the beginning of each cycle to project all residuals into the space $\mathcal{C}^{\perp}$, and we note the residual projections used in rsbGMRES do not correspond to residuals projections from the framework using appropriate GMRES projections. The rsbGMRES algorithm then works by deriving a shift dependent minimization problem and using a shifted augmented Arnoldi relation to develop a suitable problem on a small subspace. We will not give any more details on the rsbGMRES algorithm but will refer the reader to \cite{MR3530183} as well as the rsbGMRES MATLAB implementation \cite{soodhalterrsbGMREScode}.
\section{Unprojected formulation of the framework allows for a practical recycled shifted block implementation for shifted systems}\label{sec:unprojrsbFOMandGMRES}

We propose to overcome the issues discussed in the previous section using the observation made in \cite{burke2022augmented} where it was shown that it is not necessary to view the framework in terms of solving a projected problem. This is because the  
projected problem (\ref{projectedproblem}) has an unprojected equivalent given by
\\ \newline
\noindent\fbox{%
    \parbox{\textwidth}{
   Find $\textbf{x}_{j}^{(i)} \in \mathcal{W}_{j}$  as an approximate solution corresponding to shift $\sigma_{i}$ for the shifted linear system
   \begin{equation}
     ( \textbf{A} + \sigma_{i} \textbf{I}) \textbf{x}^{(i)} =  \textbf{r}_{0}^{(i)}
   \end{equation}
    such that $\textbf{r}_{j}^{(i)} =  \textbf{r}_{0}^{(i)} - (\textbf{A} + \sigma \textbf{I}) \textbf{x}_{j}^{(i)} \perp (\textbf{I} - \Phi^{(i)})^{*} \widetilde{\mathcal{W}}_{j} $.
    }
}
\newline \\

The proof of this equivalence is a trivial extension of the proof shown in \cite{burke2022augmented} to the shifted case. 
This version of the problem yields a natural choice of $\mathcal{W}_{j}$ given by the unprojected block Krylov subspace $\mathcal{K}_{j}(\textbf{A},\textbf{R}_{0})$. Since this is an unprojected Krylov subspace we can avoid the difficulties discussed in the previous section and furthermore we can exploit the same invariance \cref{eq:sylvester_invariance} used by sbFOM and sbGMRES without artificially imposing an invariance by performing an additional projection of the residuals.

The shift dependent projector $(\textbf{I} - \Phi^{(i)})$ is now built into the constraint space and appears in the equation one must solve for the correction  $\textbf{y}_{j}^{(i)}$ for shift $i$ and is given by
\begin{equation}\label{eq:shiftedequationfory}
    (\widetilde{\textbf{W}}_{j}^{*}(\textbf{I}-\Phi^{(i)})( \textbf{A} + \sigma_{i} \textbf{I})\textbf{W}_{j}) \textbf{y}_{j}^{(i)} = \widetilde{\textbf{W}}^{*}_{j} (\textbf{I}-\Phi^{(i)}) \textbf{r}_{0}^{(i)}.
\end{equation}
This is solved \textit{after} building the unprojected block Krylov subspace and this equation can easily be solved for each shift separately without large computation or storage efforts. The solution approximation can then be updated as normal using equation \cref{eq:general_solutionapproximation}. 

In the next subsection we introduce the unprojected rsbFOM and unprojected rsbGMRES methods simply by taking appropriate choices of search and constraint spaces in the above formulation. One of the main advantages of these methods is that they reduce the amount of times we must apply a projector at each iteration. This is because an unprojected method completely avoids projections in the $j$ iterations of the Arnoldi process. In the next subsections it is clear to see our algorithms require the construction and application of a shift dependent projector $4 s$ times per cycle. Each construction of these projectors requires computing the inverse of a $k \times k$ matrix. 
This is in contrast to the rsbGMRES algorithm which requires construction and application of a fixed projector $j$ times, in addition to construction and application of a shift dependent projector $2 s$ times. The construction of this shift dependent projector also requires the inverse of a $k \times k$ matrix.
\subsection{unproj rsbFOM}
We will now derive an unprojected rsbFOM algorithm using the unprojected formulation of the framework. This involves simply selecting the appropriate choices of $\widetilde{\mathcal{W}}_{j}, \widetilde{\mathcal{U}}$ and $\mathcal{U}$ for a FOM method. From the previous discussion we know we must choose $\mathcal{W}_{j} = \mathcal{K}_{j}(\textbf{A},\textbf{R}_{0})$.  
For a FOM algorithm we choose $\widetilde{\mathcal{W}}_{j} = \mathcal{W}_{j}$ and $\widetilde{\mathcal{U}} = \mathcal{U}$. 
Similar to other recycle methods such as GCRO-DR \cite{MR2272183}, we will will impose that $\textbf{C} = \textbf{A} \textbf{U}$ yielding the projector
\[
\Phi^{(i)} = (\textbf{C}+\sigma_{i} \textbf{U})(\textbf{U}^{*} \textbf{C} + \sigma_{i} \textbf{U}^{*} \textbf{U})^{-1} \textbf{U}^{*}.
\]
We will not assume orthonormality of any of the columns of $\textbf{U}$ or $\textbf{C}$ in any of the methods developed in this paper. One may wish to impose orthonormality for algorithmic simplifications.
The problem one must solve for $\textbf{y}_{j}^{(i)}$ \cref{eq:shiftedequationfory} 
is now given by
\[
    (\textbf{W}_{j}^{*}(\textbf{I}-\Phi^{(i)})( \textbf{A} + \sigma_{i} \textbf{I})\textbf{W}_{j}) \textbf{y}_{j}^{(i)} = \textbf{W}^{*}_{j} (\textbf{I}-\Phi^{(i)}) \textbf{r}_{0}^{(i)}.
\]

We can use the following relation
\[
      \textbf{r}_{0}^{(i)} = \textbf{W}_{j} \textbf{E}_{j} \textbf{S}_{0} \textbf{e}_{i},
\]
in combination with the shifted block Arnoldi relation \cref{eq:block_Arnoldi} to yield the following cheap problem to be solved for each $\textbf{y}_{j}^{(i)}, i=1,..,s$,
\begin{equation*}
  \begin{aligned}
    \bigg( (\textbf{H}_{j}+\sigma_{i} \textbf{I}) - (\textbf{W}_{j}^{*} \textbf{C} + \sigma_{i} \textbf{W}_{j}^{*} \textbf{U})(\textbf{U}^{*}\textbf{C} + \sigma_{i} \textbf{U}^{*}\textbf{U})^{-1} \textbf{U}^{*} \textbf{W}_{j+1}(\overline{\textbf{H}}_{j} + \sigma_{i} \overline{\textbf{I}})   \bigg) \textbf{y}_{j}^{(i)} \\ = \textbf{E}_{j} \textbf{S}_{0}\textbf{e}_{i} - (\textbf{W}_{j}^{*} \textbf{C} + \sigma_{i} \textbf{W}_{j}^{*} \textbf{U})(\textbf{U}^{*}\textbf{C} + \sigma_{i} \textbf{U}^{*} \textbf{U})^{-1} \textbf{U}^{*} \textbf{r}_{0}^{(i)}.
    \end{aligned}
\end{equation*}

At step $j$ once we have solved the shifted problem for each $\textbf{y}_{j}^{(i)}$, the solution approximation \cref{eq:general_solutionapproximation} can be computed as
\[
    \textbf{x}_{j}^{(i)} = \textbf{x}_{0}^{(i)} + \textbf{W}_{j} \textbf{y}_{j}^{(i)} + \textbf{U}(\textbf{U}^{*}\textbf{C} + \sigma_{i} \textbf{U}^{*}\textbf{U})^{-1}\textbf{U}^{*}(\textbf{r}_{0}^{(i)} - \textbf{W}_{j+1}(\sigma_{i} \overline{\textbf{I}} + \overline{\textbf{H}}_{j}) \textbf{y}_{j}^{(i)}).
\]
The corresponding residual at step $j$ is given by
\[
    \textbf{r}_{j}^{(i)} = \textbf{r}_{0}^{(i)} - \textbf{W}_{j+1}(\sigma_{i} \overline{\textbf{I}} + \overline{\textbf{H}}_{j}) \textbf{y}_{j}^{(i)} - (\textbf{C}+\sigma_{i} \textbf{U})(\textbf{U}^{*}\textbf{C}+\sigma_{i} \textbf{U}^{*}\textbf{U})^{-1} \textbf{U}^{*}(\textbf{r}_{0}^{(i)} - \textbf{W}_{j+1}(\sigma_{i} \overline{\textbf{I}} + \overline{\textbf{H}}_{j})\textbf{y}_{j}^{(i)}).
\]
The unprojected rsbFOM algorithm is outlined in \Cref{alg:unprojrsbFOM}.

\begin{algorithm}[H]
\caption{unprojected recycled shifted block FOM (unproj  rsbFOM)
\label{alg:unprojrsbFOM}} 
\begin{algorithmic}[1]
\STATE{\textbf{Input:} $\textbf{A} \in \mathbb{C}^{n \times n}$,  $\textbf{b}^{(1)}, \textbf{b}^{(2)}, ..., \textbf{b}^{(s)} \in \mathbb{C}^{n}$, $\textbf{x}_{0}^{(1)}, \textbf{x}_{0}^{(2)},...,\textbf{x}_{0}^{(s)} \in \mathbb{C}^{n}$, shifts $\sigma_{1},..,\sigma_{s} \in \mathbb{C}$, $\textbf{U} \in \mathbb{C}^{n \times k} $ whose columns span the augmenting recycled subspace $\mathcal{U}$,  $\textbf{C} = \textbf{A} \textbf{U}$, tolerance $\epsilon$, Arnoldi cycle length $j$.}
\FOR{$i=1,2,..,s$}
  \STATE{$\textbf{r}^{(i)}_{0} = \textbf{b}^{(i)} - (\textbf{A}+\sigma_{i}\textbf{I}) \textbf{x}^{(i)}_{0}$}
\ENDFOR
\IF{$\textbf{r}_{i} = \beta \textbf{r}_{k}$ for some $i \neq k$ and $\beta \in \mathbb{C}$}
\STATE{Render residuals non-colinear.}
\ENDIF
\STATE{Set $\textbf{R} \leftarrow \begin{bmatrix} \textbf{r}_{0}^{(1)} & \textbf{r}_{0}^{(2)} & ... & \textbf{r}_{0}^{(s)} \end{bmatrix}$}
\WHILE{$\|\textbf{R}\| > \epsilon$}
\STATE{Build a basis for the unprojected block Krylov subspace $\mathcal{K}_{j}(\textbf{A},\textbf{R})$ via the block Arnoldi algorithm generating $\textbf{W}_{j+1} \in \mathbb{C}^{n \times (j+1)s}$ and $\overline{\textbf{H}}_{j} \in \mathbb{C}^{(j+1)s\times js}$.}
\FOR{$i = 1,.., s$}
\STATE{$\Phi \leftarrow (\textbf{C}+\sigma_{i} \textbf{U})(\textbf{U}^{*}\textbf{C} + \sigma_{i} \textbf{U}^{*}\textbf{U})^{-1} \textbf{U}^{*}$}
\STATE{$\textbf{M} \leftarrow \textbf{W}_{j}^{*} (\textbf{I} - \Phi) \textbf{W}_{j+1}(\sigma_{i} \overline{\textbf{I}} + \overline{\textbf{H}}_{j}) $}
\STATE{$\textbf{g} \leftarrow \textbf{W}_{j}^{*}(\textbf{I} - \Phi) \textbf{r}_{0}^{(i)}$}
\STATE{Solve $ \textbf{M} \textbf{y} = \textbf{g}$ for $\textbf{y}_{j}$}
\STATE{$\textbf{s} \leftarrow \textbf{r}_{0}^{(i)} - \textbf{W}_{j+1}(\sigma_{i} \overline{\textbf{I}} + \overline{\textbf{H}}_{j}) \textbf{y}$}
\STATE{ $\textbf{x}_{j}^{(i)} \leftarrow \textbf{x}_{0}^{(i)} + \textbf{W}_{j} \textbf{y} + \textbf{U}(\textbf{U}^{*}\textbf{C} + \sigma_{i} \textbf{U}^{*}\textbf{U})^{-1}\textbf{U}^{*} \textbf{s}$}
\STATE{$\textbf{r}_{j}^{(i)} \leftarrow \textbf{s}
- \Phi \textbf{s}$}
\ENDFOR
\STATE{$\textbf{R} \leftarrow \begin{bmatrix} \textbf{r}_{j}^{(1)} & \textbf{r}_{j}^{(2)} & ... & \textbf{r}_{j}^{(s)}\end{bmatrix}$}
\STATE{Update $\textbf{U}$ and $\textbf{C}$ such that $\textbf{C} = \textbf{A} \textbf{U}$ to use for next matrix $\textbf{A}$}.
\ENDWHILE
\end{algorithmic}
\end{algorithm}

\subsection{unproj rsbGMRES}
Similar ideas as discussed in the development of unprojected rsbFOM also hold true in the development of unprojected rsbGMRES except this time we take $\mathcal{W}_{j} = \mathcal{K}_{j}(\textbf{A},\textbf{R}_{0})$, $\widetilde{\mathcal{W}} = (\textbf{A} + \sigma_{i}\textbf{I}) \mathcal{W}_{j}$  and  $\widetilde{\mathcal{U}} = (\textbf{A}+\sigma \textbf{I}) \mathcal{U}$. 
We solve the problem \cref{eq:shiftedequationfory} for $\textbf{y}_{j}^{(i)}$ with these subspace choices and the GMRES projector
\[
\Phi^{(i)} = (\textbf{C} + \sigma_{i}\textbf{U})((\textbf{C} + \sigma_{i}\textbf{U})^{*}(\textbf{C} + \sigma_{i}\textbf{U}))^{-1}(\textbf{C} + \sigma_{i}\textbf{U})^{*}.
\]

At step $j$ the solution approximation for shift $i$ is 
\begin{align*}
    \textbf{x}_{j}^{(i)} = \textbf{x}_{0}^{(i)} + \textbf{W}_{j}\textbf{y}_{j}^{(i)} + \textbf{U}((\textbf{C}+\sigma_{i}\textbf{U})^{*}(\textbf{C}+\sigma_{i}\textbf{U}))^{-1}(\textbf{C} + \sigma_{i} \textbf{U})^{*}(\textbf{r}_{0}^{(i)} - \textbf{W}_{j+1}(\overline{\textbf{H}}_{j} + \sigma_{i}\overline{\textbf{I}}) \textbf{y}_{j}^{(i)}),
\end{align*}
and the corresponding residual is 
\[
    \textbf{r}_{j}^{(i)} = \textbf{r}_{0}^{(i)} - \textbf{W}_{j+1} (\sigma_{i}\overline{\textbf{I}} + \overline{\textbf{H}}_{j})\textbf{y}_{j}^{(i)} - \Phi^{(i)}(\textbf{r}_{0}^{(i)} - \textbf{W}_{j+1} (\sigma_{i}\overline{\textbf{I}} + \overline{\textbf{H}}_{j})\textbf{y}_{j}^{(i)}).
\]

From the above projections we see that one could potentially impose $\textbf{C}$ to have orthonormal columns to avoid computing $\textbf{C}^{*}\textbf{C}$ each time the projector is constructed. The unprojected rsbGMRES algorithm is outlined below in \cref{alg:unprojrsbGMRES}.

\begin{algorithm}[H]
\caption{unprojected recycled shifted block GMRES (unproj rsbGMRES)
\label{alg:unprojrsbGMRES}} 
\begin{algorithmic}[1]
\STATE{\textbf{Input:} $\textbf{A} \in \mathbb{C}^{n \times n}$,  $\textbf{b}^{(1)}, \textbf{b}^{(2)}, ..., \textbf{b}^{(s)} \in \mathbb{C}^{n}$, $\textbf{x}_{0}^{(1)}, \textbf{x}_{0}^{(2)},...,\textbf{x}_{0}^{(s)} \in \mathbb{C}^{n}$, shifts $\sigma_{1},..,\sigma_{s} \in \mathbb{C}$, $\textbf{U} \in \mathbb{C}^{n \times k} $ whose columns span the augmenting recycled subspace $\mathcal{U}$,  $\textbf{C} = \textbf{A} \textbf{U}$, tolerance $\epsilon$, Arnoldi cycle length $j$.}
\FOR{$i=1,2,..,s$}
  \STATE{$\textbf{r}^{(i)}_{0} = \textbf{b}^{(i)} - (\textbf{A}+\sigma_{i}\textbf{I}) \textbf{x}^{(i)}_{0}$}
\ENDFOR
\IF{$\textbf{r}_{i} = \beta \textbf{r}_{k}$ for some $i \neq k$ and $\beta \in \mathbb{C}$}
\STATE{Render residuals non-collinear.}
\ENDIF
\STATE{Set $\textbf{R} \leftarrow \begin{bmatrix} \textbf{r}_{0}^{(1)} & \textbf{r}_{0}^{(2)} & ... & \textbf{r}_{0}^{(s)} \end{bmatrix}$}
\WHILE{$\|\textbf{R}\| > \epsilon$}
\STATE{Build a basis for the unprojected block Krylov subspace $\mathcal{K}_{j}(\textbf{A},\textbf{R})$ via the block Arnoldi algorithm generating $\textbf{W}_{j+1} \in \mathbb{C}^{n \times (j+1)s}$ and $\overline{\textbf{H}}_{j} \in \mathbb{C}^{(j+1)s\times j s}$.}
\FOR{$i = 1,.., s$}
\STATE{$\Phi \leftarrow  (\textbf{C} + \sigma_{i}\textbf{U})((\textbf{C}+\sigma_{i}\textbf{U})^{*}(\textbf{C} + \sigma_{i}\textbf{U}))^{-1}(\textbf{C}+\sigma_{i} \textbf{U})^{*}$}
\STATE{$\textbf{M} \leftarrow  (\textbf{W}_{j+1}(\sigma_{i}\overline{\textbf{I}} + \overline{\textbf{H}}_{j}))^{*}(\textbf{I}-\Phi) \textbf{W}_{j+1}( \sigma_{i} \overline{\textbf{I}} + \overline{\textbf{H}}_{j})$}
\STATE{$\textbf{g} \leftarrow (\textbf{W}_{j+1}(\sigma_{i}\overline{\textbf{I}} + \overline{\textbf{H}}_{j}))^{*} (\textbf{I}-\Phi) \textbf{r}_{0}^{(i)}$}
\STATE{Solve $ \textbf{M} \textbf{y} = \textbf{g}$ for $\textbf{y}$}
\STATE{$\textbf{s} \leftarrow \textbf{r}_{0}^{(i)}  - \textbf{W}_{j+1} (\overline{\textbf{H}}_{j} + \sigma_{i} \overline{\textbf{I}}) \textbf{y}$ }
\STATE{$\textbf{x}_{j}^{(i)} \leftarrow \textbf{x}_{0}^{(i)} + \textbf{W}_{j}\textbf{y} + \textbf{U}((\textbf{C}+\sigma_{i}\textbf{U})^{*}(\textbf{C}+\sigma_{i}\textbf{U}))^{-1}(\textbf{C} + \sigma_{i} \textbf{U})^{*} \textbf{s}$}
\STATE{$\textbf{r}_{j}^{(i)} \leftarrow \textbf{s} - \Phi \textbf{s} $}
\ENDFOR
\STATE{$\textbf{R} \leftarrow \begin{bmatrix} \textbf{r}_{j}^{(1)} & \textbf{r}_{j}^{(2)} & ... & \textbf{r}_{j}^{(s)}\end{bmatrix}$}
\STATE{Update $\textbf{U}$ and $\textbf{C}$ such that $\textbf{C} = \textbf{A} \textbf{U}$ to use for next matrix $\textbf{A}$}.
\ENDWHILE
\end{algorithmic}
\end{algorithm}

\section{Block harmonic Ritz recycling for unprojected recycled shifted block methods}\label{sec:harmritz}
In this section we derive a shift dependent harmonic Ritz extraction procedure for updating the recycling subspace $\mathcal{U}$ in unprojected rsbFOM and unprojected rsbGMRES. This gives the freedom to choose for which shift we construct the recycling subspace. For methods with shifted systems it is common to construct a recycle subspace from a Ritz or harmonic Ritz procedure corresponding to a shift of $0$. This was done in the rsbGMRES algorithm \cite{soodhalterrsbGMREScode, MR3530183} and in general the  recycling subspace works well enough for all shifts. However, as noted in \cite{MR3530183} for shifted systems it may not always be the case that a recycling subspace computed according to a base shift works well for all shifted systems and this is our motivation for including a shift dependent harmonic Ritz procedure for our methods.
One approach may then be to construct a recycle subspace for a new shift at each cycle of the algorithm.

Below we derive this harmonic Ritz procedure using the following notation
\[\widehat{\textbf{V}}_{j} = \begin{bmatrix}
      \textbf{U} & \textbf{W}_{j}
\end{bmatrix} \in \mathbb{C}^{n \times (k + s j)}, \widehat{\textbf{W}}_{j+1} = \begin{bmatrix} \textbf{C} & \textbf{W}_{j+1} \end{bmatrix} \in \mathbb{C}^{n \times (k + (j+1) s)}\]
and 
\[
\overline{\textbf{G}}_{j}^{(i)}  = \begin{bmatrix}
      \textbf{I} & 0 \\ 0 & \sigma_{i} \overline{\textbf{I}} + \overline{\textbf{H}}_{j}
\end{bmatrix} \in \mathbb{C}^{(k + (j+1) s) \times (k + j s)}.
\]
\begin{proposition}
\normalfont The harmonic Ritz problem for the matrix $\textbf{A} + \sigma_{i} \textbf{I}$ with respect to the augmented block Krylov subspace space $\mathcal{K}_{j}(\textbf{A}, \textbf{R}) + \mathcal{U}$ involves solving the following eigenproblem 
\begin{equation}\label{eq:general_harmritz}
     (\widehat{\textbf{W}}_{j+1}\overline{\textbf{G}}^{(i)}_{j} + \textbf{M}^{(i)})^{*} \widehat{\textbf{V}}_{j} \textbf{g}_{\ell} = \mu_{\ell} (\widehat{\textbf{W}}_{j+1}\overline{\textbf{G}}_{j}^{(i)} + \textbf{M}^{(i)})^{*} (\widehat{\textbf{W}}_{j+1}\overline{\textbf{G}}_{j}^{(i)} + \textbf{M}^{(i)}) \textbf{g}_{\ell},
\end{equation}
with $\mu_{\ell} \in \mathbb{C}$, $\textbf{g}_{\ell} \in \mathbb{C}^{js+k}$ and $\textbf{M}^{(i)} = \begin{bmatrix} \sigma_{i} \textbf{U} & 0 \end{bmatrix} \in \mathbb{C}^{n \times (k + j s)} $.
\end{proposition}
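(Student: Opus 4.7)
The plan is to set up the standard Petrov--Galerkin characterization of harmonic Ritz pairs for the shifted matrix $\textbf{A} + \sigma_i \textbf{I}$ with search space $\mathcal{K}_j(\textbf{A}, \textbf{R}) + \mathcal{U}$ and then translate it into coordinates using the bases $\widehat{\textbf{V}}_j$ and $\widehat{\textbf{W}}_{j+1}$ together with the block Arnoldi relation \cref{eq:block_Arnoldi}. A harmonic Ritz pair $(\theta_\ell, \textbf{y}_\ell)$, with $\textbf{y}_\ell \in \mathcal{K}_j(\textbf{A},\textbf{R}) + \mathcal{U}$ nonzero, is defined by requiring
\[
((\textbf{A} + \sigma_i \textbf{I}) - \theta_\ell \textbf{I}) \textbf{y}_\ell \;\perp\; (\textbf{A} + \sigma_i \textbf{I})(\mathcal{K}_j(\textbf{A}, \textbf{R}) + \mathcal{U}),
\]
and the target equation will be obtained by setting $\mu_\ell = 1/\theta_\ell$.

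The first key step is to compute the matrix representation of $(\textbf{A} + \sigma_i \textbf{I})$ restricted to the augmented subspace in the basis $\widehat{\textbf{V}}_j = [\textbf{U}\;\textbf{W}_j]$. Using $\textbf{C} = \textbf{A}\textbf{U}$ together with the block Arnoldi relation $\textbf{A}\textbf{W}_j = \textbf{W}_{j+1}\overline{\textbf{H}}_j$, I would check column-block by column-block that
\[
(\textbf{A} + \sigma_i \textbf{I}) \widehat{\textbf{V}}_j = \bigl[\,\textbf{C} + \sigma_i \textbf{U} \;\; \textbf{W}_{j+1}(\overline{\textbf{H}}_j + \sigma_i \overline{\textbf{I}})\,\bigr] = \widehat{\textbf{W}}_{j+1}\overline{\textbf{G}}_j^{(i)} + \textbf{M}^{(i)},
\]
where the block-diagonal structure of $\overline{\textbf{G}}_j^{(i)}$ produces the two desired block columns and the extra piece $\textbf{M}^{(i)} = [\,\sigma_i \textbf{U}\;\;0\,]$ accounts for the fact that the first block column of $\widehat{\textbf{W}}_{j+1}\overline{\textbf{G}}_j^{(i)}$ is $\textbf{C}$ rather than $\textbf{C}+\sigma_i \textbf{U}$.

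Now I would write $\textbf{y}_\ell = \widehat{\textbf{V}}_j \textbf{g}_\ell$ and express both the action and the test space in the identified basis. The residual becomes
\[
(\textbf{A} + \sigma_i \textbf{I})\textbf{y}_\ell - \theta_\ell \textbf{y}_\ell = (\widehat{\textbf{W}}_{j+1}\overline{\textbf{G}}_j^{(i)} + \textbf{M}^{(i)}) \textbf{g}_\ell - \theta_\ell \widehat{\textbf{V}}_j \textbf{g}_\ell,
\]
while the test space $(\textbf{A} + \sigma_i \textbf{I})(\mathcal{K}_j(\textbf{A},\textbf{R}) + \mathcal{U})$ is exactly the column span of $\widehat{\textbf{W}}_{j+1}\overline{\textbf{G}}_j^{(i)} + \textbf{M}^{(i)}$. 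Enforcing orthogonality by left-multiplying the residual by $(\widehat{\textbf{W}}_{j+1}\overline{\textbf{G}}_j^{(i)} + \textbf{M}^{(i)})^{*}$ and setting the result to zero yields the generalized eigenproblem
\[
(\widehat{\textbf{W}}_{j+1}\overline{\textbf{G}}_j^{(i)} + \textbf{M}^{(i)})^{*}(\widehat{\textbf{W}}_{j+1}\overline{\textbf{G}}_j^{(i)} + \textbf{M}^{(i)}) \textbf{g}_\ell = \theta_\ell (\widehat{\textbf{W}}_{j+1}\overline{\textbf{G}}_j^{(i)} + \textbf{M}^{(i)})^{*} \widehat{\textbf{V}}_j \textbf{g}_\ell.
\]
Dividing by $\theta_\ell$ and setting $\mu_\ell = 1/\theta_\ell$ gives the stated form \cref{eq:general_harmritz}.

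The only real obstacle is the bookkeeping in the first step: correctly identifying $\overline{\textbf{G}}_j^{(i)}$ and the correction $\textbf{M}^{(i)}$ from the block structure of $\widehat{\textbf{V}}_j$ and $\widehat{\textbf{W}}_{j+1}$. Once that identity is in hand, the rest reduces to a standard harmonic Ritz derivation, and I would close by noting well-definedness of the division by $\theta_\ell$ under the standing assumption that $0$ is not a harmonic Ritz value (equivalently, $(\textbf{A} + \sigma_i \textbf{I})\widehat{\textbf{V}}_j$ has full column rank, which follows from the no-breakdown assumption on the block Arnoldi process together with invertibility of $\textbf{A} + \sigma_i \textbf{I}$).
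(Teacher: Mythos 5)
Your proof is correct and takes essentially the same route as the paper: both rest on verifying the augmented Arnoldi relation $(\textbf{A}+\sigma_{i}\textbf{I})\widehat{\textbf{V}}_{j} = \widehat{\textbf{W}}_{j+1}\overline{\textbf{G}}_{j}^{(i)} + \textbf{M}^{(i)}$ and then imposing the Petrov--Galerkin orthogonality against $(\textbf{A}+\sigma_{i}\textbf{I})(\mathcal{K}_{j}(\textbf{A},\textbf{R})+\mathcal{U})$. The only difference is cosmetic: the paper starts from the equivalent characterization of harmonic Ritz pairs as Ritz pairs of $(\textbf{A}+\sigma_{i}\textbf{I})^{-1}$ with $\textbf{y}_{\ell}$ drawn from the transformed space, so the $\mu_{\ell}$-form of the stated eigenproblem emerges directly, without your division by $\theta_{\ell}$ and the accompanying side condition $\theta_{\ell}\neq 0$.
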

\begin{proof}
Combining the shifted block Arnoldi relation \cref{eq:block_shift_invariance} and $\textbf{C} = \textbf{A} \textbf{U}$ yields the following augmented Arnoldi relation
\begin{equation}\label{eq:augmentedArnoldi}
(\textbf{A} + \sigma_{i} \textbf{I}) \begin{bmatrix} \textbf{U} & \textbf{W}_{j} \end{bmatrix} = \begin{bmatrix} \textbf{C} & \textbf{W}_{j+1} \end{bmatrix} \begin{bmatrix}
      \textbf{I} & 0 \\
      0 & \sigma_{i} \overline{\textbf{I}} + \overline{\textbf{H}}_{j}
\end{bmatrix} + \begin{bmatrix}
      \sigma_{i} \textbf{U} & 0
\end{bmatrix}.
\end{equation}
The harmonic Ritz problem for the matrix $\textbf{A} + \sigma_{i} \textbf{I}$ with respect to the space $\mathcal{K}_{j}(\textbf{A},\textbf{R}) + \mathcal{U}$ requires one to solve the following problem
 \begin{align*}
    \mbox{Find} \hspace{0.5cm} (\textbf{y}_{\ell},\mu_{\ell}) \hspace{0.5cm} \mbox{such that} \hspace{0.5cm} (\textbf{A} + \sigma_{i} \textbf{I})^{-1}\textbf{y}_{\ell} - \mu_{\ell} \textbf{y}_{\ell} \perp (\textbf{A} + \sigma_{i} \textbf{I})(\mathcal{K}_{j}( \textbf{A}, \textbf{R}) + \mathcal{U}) \\ \mbox{with} \hspace{0.5cm} \textbf{y}_{\ell} \in (\textbf{A} + \sigma_{i} \textbf{I})(\mathcal{K}_{j}( \textbf{A}, \textbf{R}) + \mathcal{U}).
\end{align*}
Since $\textbf{y}_{\ell}$ is taken from the augmented block Krylov subspace $(\textbf{A} + \sigma_{i} \textbf{I})(\mathcal{K}_{j}( \textbf{A}, \textbf{R}) + \mathcal{U})$, it can be written as $\textbf{y}_{\ell} = (\textbf{A} + \sigma_{i} \textbf{I}) \widehat{\textbf{V}}_{j} \textbf{g}_{\ell}$ for some $\textbf{g}_{\ell} \in \mathbb{C}^{k + s j}$. 
Application of the orthogonality constraint yields an eigenproblem for $\textbf{g}_{\ell}$
\begin{align*}
    0 = ((\textbf{A}+\sigma_{i} \textbf{I}) \widehat{\textbf{V}}_{j})^{*}( (\textbf{A}+\sigma_{i} \textbf{I})^{-1}(\textbf{A} + \sigma_{i} \textbf{I}) \widehat{\textbf{V}}_{j} \textbf{g}_{\ell} - \mu_{\ell} (\textbf{A}+\sigma_{i} \textbf{I}) \widehat{\textbf{V}}_{j} \textbf{g}_{\ell})
    \\ =   (\widehat{\textbf{W}}_{j+1}\overline{\textbf{G}}_{j}^{(i)} + \textbf{M}^{(i)})^{*} (\widehat{\textbf{V}}_{j} \textbf{g}_{\ell} - \mu_{\ell} (\widehat{\textbf{W}}_{j+1}\overline{\textbf{G}}_{j}^{(i)} + \textbf{M}^{(i)}) \textbf{g}_{\ell})
    \\ =   (\widehat{\textbf{W}}_{j+1}\overline{\textbf{G}}_{j}^{(i)} + \textbf{M}^{(i)})^{*} \widehat{\textbf{V}}_{j} \textbf{g}_{\ell} - \mu_{\ell} (\widehat{\textbf{W}}_{j+1}\overline{\textbf{G}}_{j}^{(i)} + \textbf{M}^{(i)})^{*} (\widehat{\textbf{W}}_{j+1}\overline{\textbf{G}}^{(i)}_{j} + \textbf{M}^{(i)}) \textbf{g}_{\ell}.
\end{align*}
After solving this eigenproblem for each $\textbf{g}_{\ell}, \ell = 1,2,..,k$, we then compute
\[
\textbf{y}_{\ell} = (\textbf{A} + \sigma_{i} \textbf{I}) \widehat{\textbf{V}}_{j} \textbf{g}_{\ell}.
\]
\end{proof}
If we were interested in just computing $\mathcal{U}$ according to the shift $\sigma_{i} = 0$ then this problem reduces to the following eigenproblem
\begin{equation}\label{eq:special_harmritz}
     \overline{\textbf{G}}_{j}^{*} \widehat{\textbf{W}}_{j+1}^{*} \widehat{\textbf{V}}_{j} \textbf{g}_{\ell} = \mu_{\ell} \overline{\textbf{G}}_{j}^{*} \widehat{\textbf{W}}_{j+1}^{*} \widehat{\textbf{W}}_{j+1}\overline{\textbf{G}}_{j} \textbf{g}_{\ell},
\end{equation}
where $\overline{\textbf{G}}_{j}$ is simply $\overline{\textbf{G}}_{j}^{(i)}$ with $\sigma_{i} = 0$.

 In contrast to GCRO-DR type methods \cite{MR2272183}, our methods are built from unprojected Krylov subspaces and thus the columns of $\widehat{\textbf{W}}_{j+1}$ are not orthonormal and the product $\widehat{\textbf{W}}^{*}_{j+1} \widehat{\textbf{W}}_{j+1}$ still appears in the harmonic Ritz problem.

\section{Numerical Experiments}\label{sec:numerical_experiments}
In this Section we present our numerical experiments used to demonstrate the effectiveness of unprojected rsbFOM and unprojected rsbGMRES as both an augmented Krylov subspace method and a recycled Krylov subspace method. The augmentation subspaces we constructed in these experiments used the harmonic Ritz procedure we derived for shifted systems in \Cref{sec:harmritz}.

In the first experiment we test how our methods perform as augmented Krylov subspace methods. We do this by solving a single shifted system and varying the augmentation subspace dimension. We should expect that increasing the recycle subspace dimension leads to a faster rate of convergence for the iterative solver. In \cref{fig:unprojrsbFOM} and \Cref{fig:unprojrsbGMRES} we compare both unprojected rsbFOM and unprojected rsbGMRES to sbFOM and sbGMRES respectively. We see how our augmented methods always lead to a more rapid convergence in the $2$ norm of the block residual. The rate of convergence increases as we increase the augmentation subspace dimension. This is to be expected for any viable augmented Krylov subspace method.
\begin{figure}[H]
    \centering
    \includegraphics[width=8cm]{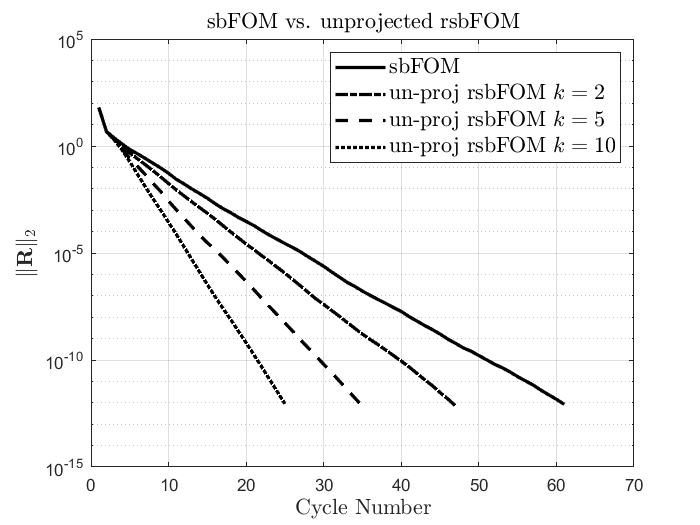}
    \caption{A comparison of sbFOM with unprojected rsbFOM (un-proj rsbFOM) for a Lattice QCD Wilson Dirac Matrix of size $3042\times 3042$ with $j = 10$, shifts = $0,1,2,3,4$ and $k=2,5,10$. The unprojected rsbFOM method was augmented with a recycle subspace coming from a previous system solve.}
    \label{fig:unprojrsbFOM}
\end{figure}
\begin{figure}[H]
    \centering
    \includegraphics[width=8cm]{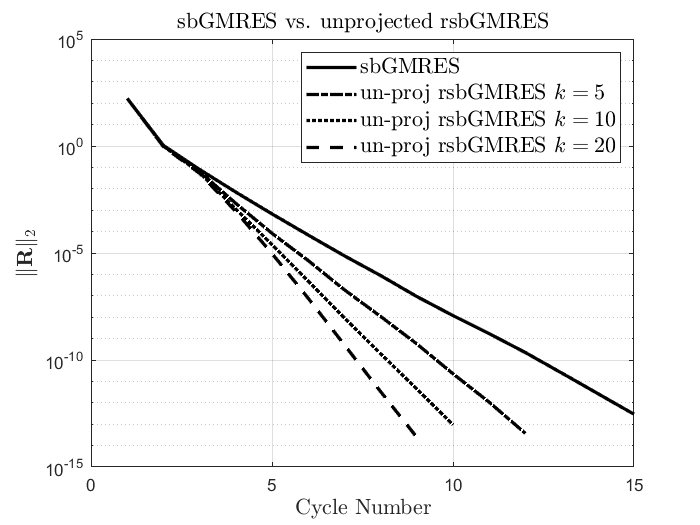}
    \caption{A comparison of sbGMRES with unprojected rsbGMRES for a Lattice QCD Wilson Dirac matrix of size $3042\times 3042$ with $j = 20$, shifts $= {0,0.01,0.02}$ and $k=5,10,20$. The unprojected rsbGMRES method was augmented with a recycle subspace coming from a previous system solve.}
    \label{fig:unprojrsbGMRES}
\end{figure}
We now demonstrate the effectiveness of unprojected rsbFOM and unprojected rsbGMRES as recycle methods. We show this by demonstrating  a significant reduction in matrix times block vector operations ( denoted as MAT-VECs) when solving a sequence of shifted linear systems of the form \cref{eq:shiftedsystem}. For these problems the matrices for each system is slowly changing, contains a new set of shifts, and a different block right hand side. We show results of our experiment for unprojected rsbGMRES, but note that similar behaviour can be observed for unprojected rsbFOM. We start off with a base matrix and introduce a perturbation parameter $\epsilon$ which determines the strength of how much the matrix changes. For the purpose of this experiment we perturb the matrix by some random matrix with the same sparsity pattern. We use a different set of shifts for each system in the sequence and a newly generated random block right hand side.  Our results are shown in \cref{fig:recycle_test}.

\begin{figure}[H]
    \centering
    \includegraphics[width=8cm]{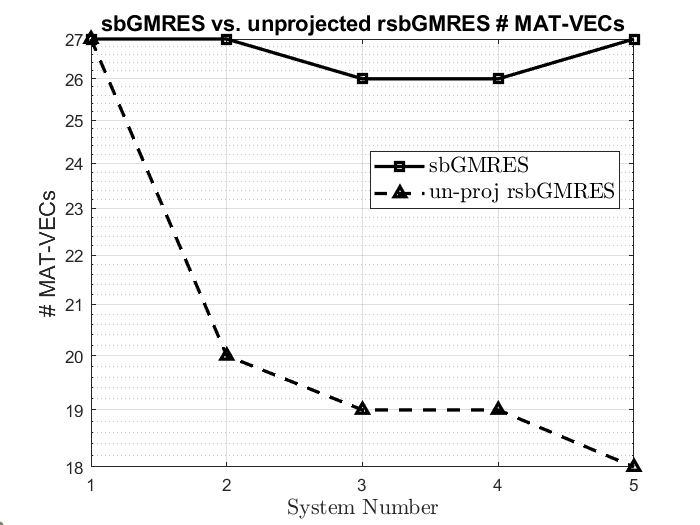}
    \caption{A comparison of the number of matrix times block vector operations (denoted MAT-VECs) required to meet a specified tolerance in sbGMRES and unprojected rsbGMRES when recycling between a family of shifted linear systems. We used a Lattice QCD Wilson Dirac matrix of size $3042\times 3042$ with $j = 15, k = 5$, and with $s$ shifts per family each separated by an increment of $0.0001$. We used a perturbation of strength $\epsilon = 0.01$ and new unrelated block right hand side for each system.}
    \label{fig:recycle_test}
\end{figure}
 Our experiment demonstrates a significant reduction in the matrix times  block vector operations required for solving each family in the sequence when using our new unprojected rsbGMRES algorithm. This makes the method a viable recycle method for problems of the form \cref{eq:shiftedsystem}.

In \Cref{fig:LQCDrsbGMRESvunprojrsbGMRES} and \Cref{fig:PoissonrsbGMRESvunprojrsbGMRES} we compare the convergence of rsbGMRES and unprojected rsbGMRES.  For a fair comparison we  ensure each method constructs a recycle subspace using its appropriate harmonic Ritz procedure. The methods have thus used different recycle subspaces in this experiment. In \Cref{sec:harmritz} we derived a shift dependent harmonic Ritz procedure for constructing a recycling subspace for our new unprojected methods using the appropriate augmented Arnoldi relation \cref{eq:augmentedArnoldi}. For the rsbGMRES algorithm this Arnoldi relation is the same except we now have that the matrix $\overline{\textbf{G}}_{j}^{(i)}$ is given by $\overline{\textbf{G}}_{j}^{(i)}  = \begin{bmatrix}
      \textbf{I} & \textbf{Z} \\ 0 & \sigma_{i} \overline{\textbf{I}} + \overline{\textbf{H}}_{j}
\end{bmatrix}$,  where $\textbf{Z} \in \mathbb{C}^{k \times j s}$ is a matrix storing the orthogonalization coefficients from doing the additional orthogonalizations required when building a basis for a projected Krylov subspace. It is easy to show that once this matrix is constructed the harmonic Ritz problem one must solve in rsbGMRES is the same as the one we derived for our unprojected methods \cref{eq:general_harmritz}. In the special case where one chooses to recycle only for the base shift $\sigma_{i} = 0$, then the simplified harmonic Ritz procedure \cref{eq:special_harmritz} can be used. Since rsbGMRES is based off a projected Krylov subspace this harmonic Ritz problem has orthonormal columns in $\widehat{\textbf{W}}_{j}$ and one can avoid computation of $\widehat{\textbf{W}}_{j}^{*} \widehat{\textbf{W}}_{j}$.
\begin{figure}[H]
    \centering
    \includegraphics[width=8cm]{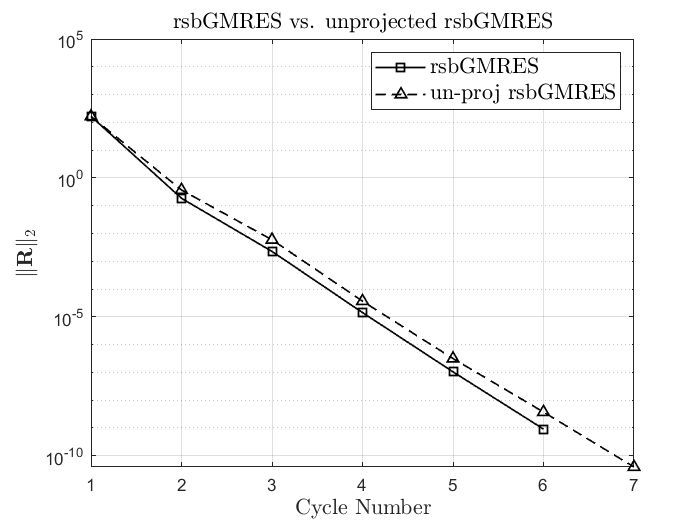}
    \caption{A comparison of the convergence behavior of rsbGMRES with unprojected rsbGMRES for a Lattice QCD Wilson Dirac matrix of size $3042\times 3042$ with $j = 30$, shifts $= {0,0.01,0.02}$ and $k=10$. Both codes solved the same system and were initially augmented with the same recycled subspace from a previous system.}
    \label{fig:LQCDrsbGMRESvunprojrsbGMRES}
\end{figure}

\begin{figure}[H]
    \centering
    \includegraphics[width=8cm]{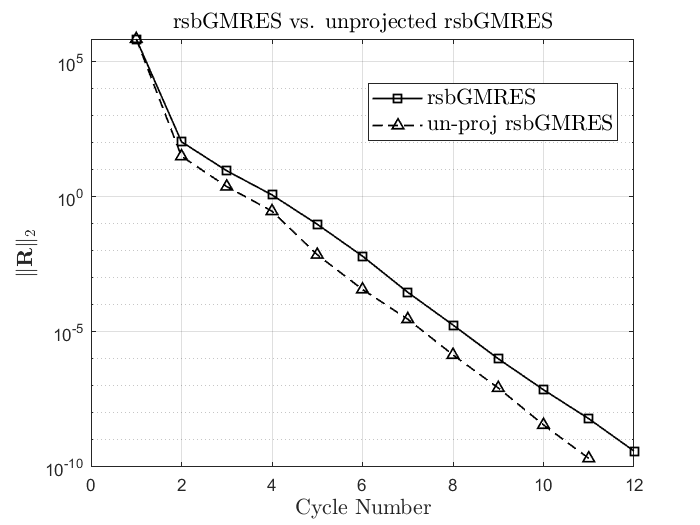}
    \caption{A comparison of the convergence behavior of rsbGMRES with unprojected rsbGMRES for a Poisson matrix of size $6400\times 6400$ with $j = 50$, shifts $= {0,1,2}$ and $k=20$. Both codes solved the same system and were initially augmented with the same recycled subspace from a previous system.}
    \label{fig:PoissonrsbGMRESvunprojrsbGMRES}
\end{figure}
  We see the convergence of unprojected rsbGMRES is very competitive with that of rsbGMRES. When we couple this with the computational advantages of  our unprojected methods discussed in \Cref{sec:unprojrsbFOMandGMRES}, it is clear our new methods represent a strong competitor for the rsbGMRES algorithm.
 
\section{Conclusions}\label{sec:conclusions}
In this paper we have presented a framework for deriving recycled shifted block Krylov subspace methods based on unprojected Krylov subspaces. Such methods are of interest since they allow us to avoid the collinearity restriction appearing in Krylov subspace methods for shifted systems. We have shown how methods based on unprojected Krylov subspaces allow one to avoid the difficulties associated with developing methods based on projected Krylov subspaces. We then introduced two new algorithms known as unprojected recycled shifted block FOM and GMRES and demonstrated their effectiveness as augmented and recycling methods. In addition, our methods reduce the amount of times projectors need to be applied in the iterations of the algorithm making them attractive in the High-Performance Computing setting. We have also derived a shift dependent harmonic Ritz procedure for shifted systems based on augmented block Krylov subspaces. This allows for one to construct a recycle subspace corresponding to a certain shift for unprojected recycled shifted block algorithms.

\section*{Acknowledgements}
I am grateful to the The Hamilton Scholars for generously funding this work. I would also like to thank my PhD advisor Kirk M. Soodhalter for useful discussions on the rsbGMRES algorithm.
\bibliographystyle{siamplain}
\bibliography{references}
\end{document}